\newtheorem*{theorem*}{Theorem}
\newtheorem{theorem}{Theorem}
\newtheorem{lemma}[theorem]{Lemma}
\newtheorem{definition}{Definition}
\def\cB{{\mathcal B}}
\def\cA{{\mathcal A}}
\def\cK{{\mathcal K}}
\def\M{\mathfrak{M}}
\def\O{\mathfrak{O}}
\newcommand{\N}{\mathbb N}
\def\ls#1{\langle #1 \rangle}
\def\Suf{\mbox{Suf}}
\begin{document}

\title{Extension of a residually finite group
    by a residually finite group is weakly sofic.}
\author{Lev Glebsky \footnote{glebsky@cactus.iico.uaslp.mx}}
\maketitle
\begin{abstract}
We show that residually finite by residually finite extensions are weakly sofic. 
\noindent {\bf Keywords:} weakly sofic groups, group extension, wreath product, equations over groups.
\end{abstract}
\section{Introduction} 
In \cite{Gr99,W00}, sofic groups have been defined
in relation with the Gottschalk surjunctivity conjecture. It is an open question if all groups are sofic.
There is a hope that a non-sofic group may be constructed as an extension of a residually finite group by a finite one,  \cite{Jaikin, mathover}. 
(Notice, however, that an extension of an amenable group by a sofic group is sofic, \cite{EL2}.)
The main result of \cite{GLTC} is an example of a non approximable by $(U(n),\|\cdot\|_2)$ group. This example is   
a residually-finite-by-finite extension. It is a kind of subtle support to above mentioned hope as sofic groups may be defined through
metric approximation by symmetric groups \cite{Pestov1}.
Here, in contrast, we prove that every residually-finite-by-residually-finite extension is weakly sofic.  The weakly sofic groups are groups metric approximable by finite ones, 
see \cite{GR}

\begin{theorem}\label{th_main}
Let $H$ be a normal subgroup of a group $K$. If $H$ and $G=K/H$ are residually finite then $K$ is weakly sofic.  
\end{theorem}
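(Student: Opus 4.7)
The plan is to factor the argument through the classical Kaloujnine--Krasner embedding of an extension into a wreath product. Fix a set-theoretic section $s\colon G\to K$ of the projection $p\colon K\to G$ with $s(1)=1$. A direct calculation shows that
\[
k\longmapsto \bigl(\alpha_k,\,p(k)\bigr),\qquad \alpha_k(g)=s(g)^{-1}\,k\,s\bigl(p(k)^{-1}g\bigr)\in H,
\]
is an injective homomorphism $K\hookrightarrow H\wr G=H^{G}\rtimes G$. Since the class of weakly sofic groups is closed under taking subgroups, it suffices to produce, for each finite $F\subset K$ (identified with its image in $H\wr G$) and each $\varepsilon>0$, a finite group with bi-invariant metric that $\varepsilon$-approximates $F$ in the weakly sofic sense.

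The approximating finite group I would use is a wreath product $\bar H\wr\bar G$ of finite quotients. Using residual finiteness of $G$, choose $G_0\triangleleft G$ of finite index so that $\bar G:=G/G_0$ separates $p(F)$. Using residual finiteness of $H$, choose a characteristic finite-index subgroup $H_0$ of $H$ (hence $H_0\triangleleft K$) so that $\bar H:=H/H_0$ separates the finite set of values $\alpha_k(\tau(\bar g))$ for $k\in F$ and $\bar g\in \bar G$, together with the finite set of their conjugates $s(\tau(\bar g))^{-1} h\, s(\tau(\bar g))$ for $h=k_1k_2^{-1}\in H\setminus\{1\}$ arising from pairs $k_1,k_2\in F$ with $p(k_1)=p(k_2)$. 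Here $\tau\colon\bar G\to G$ is any fixed set-section with $\tau(1)=1$. Then define
\[
\phi\colon F\to \bar H\wr \bar G,\qquad \phi(k)=\bigl(\bar\alpha_k,\,\bar p(k)\bigr),\qquad \bar\alpha_k(\bar g)=\overline{\alpha_k(\tau(\bar g))}.
\]
I would equip the target with the conjugation-invariant length function $\ell(f,g)=1$ if $g\ne 1$, and $\ell(f,1)=|\operatorname{supp} f|/|\bar G|$ otherwise; bi-invariance follows because the $\bar G$-shift preserves support sizes and because $\bar H^{\bar G}$ is a direct product. The separation $\ell(\phi(k_1)\phi(k_2)^{-1})\ge 1-\varepsilon$ for distinct $k_1,k_2\in F$ is then automatic: either $\bar p(k_1)\ne\bar p(k_2)$, in which case the length is $1$; or $p(k_1)=p(k_2)$, in which case the pointwise ratio $\bar\alpha_{k_1}(\bar g)\bar\alpha_{k_2}(\bar g)^{-1}$ is a conjugate of $k_1k_2^{-1}$ made non-trivial by the choice of $H_0$.

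The principal obstacle is to bound the multiplicative defect $\phi(k_1)\phi(k_2)\phi(k_1 k_2)^{-1}\in\bar H^{\bar G}$ in $\ell$. Expanding definitions, this defect is supported precisely at those $\bar g\in \bar G$ where the shifted coset representative $p(k_1)^{-1}\tau(\bar g)$ fails to coincide with $\tau(\bar p(k_1)^{-1}\bar g)$, and at these points it is determined by the extension cocycle $c(g_1,g_2)=s(g_1)s(g_2)s(g_1g_2)^{-1}$ modulo $H_0$. For amenable $G$ one could make the defect small by choosing $\tau(\bar G)$ to be a F\o lner set, but no such bound is available for a general residually finite $G$ (e.g.\ a nonabelian free group). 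This is precisely the point at which the \emph{equations over groups} keyword of the abstract must be brought to bear: the finitely many ``error elements'' produced by the cocycle form a finite system of equations over $\bar H\wr \bar G$, and by passing to a suitable finite overgroup in which these equations are solved, one can arrange the defect to be expressible as a short product of commutators of elements of small length in a compatible bi-invariant length function, thereby making it have length $<\varepsilon$ while leaving the separation above $1-\varepsilon$. Simultaneously calibrating $G_0$, $H_0$, the section $\tau$, and the overgroup so that both the defect bound and the separation bound hold is the technical heart of the argument; once it is achieved, $\phi$ exhibits the required weakly sofic approximation of $F$, and the theorem follows via Kaloujnine--Krasner.
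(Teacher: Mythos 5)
Your first half is sound and coincides with the paper's starting point: the Kaloujnine--Krasner embedding $K\hookrightarrow H\wr G$, the reduction to approximating $H\wr G$, and the separation estimate via $\bar\alpha_{k_1}(\bar g)\bar\alpha_{k_2}(\bar g)^{-1}=\overline{s(\tau(\bar g))^{-1}k_1k_2^{-1}s(\tau(\bar g))}$ are all correct. But the final step, which you yourself flag as the technical heart, is not a proof: it restates the obstruction rather than removing it. The defect $\phi(k_1)\phi(k_2)\phi(k_1k_2)^{-1}$ is supported on the set of $\bar g\in\bar G$ where $\tau\bigl(\bar p(k_1)^{-1}\bar g\bigr)\ne p(k_1)^{-1}\tau(\bar g)$ modulo the relevant normal subgroups, and for non-amenable $G$ this set occupies a proportion of $\bar G$ bounded away from $0$ for every choice of $G_0$ and $\tau$. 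Declaring that these error elements "form a finite system of equations over $\bar H\wr\bar G$" and can be made short "by passing to a suitable finite overgroup" has no mechanism behind it: you would need (i) that the relevant system is solvable over finite groups at all, and (ii) a quantitative statement that the solutions can be taken of small length in a bi-invariant length function that simultaneously keeps $\ell(\phi(k_1)\phi(k_2)^{-1})$ close to $1$. Neither is formulated, and (ii) is exactly as hard as the theorem itself; a fixed element of large support cannot be made short merely by embedding the group somewhere larger, since the length must be measured in a metric that still separates your image points.

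The paper avoids this tension entirely by never constructing finite approximating groups. It invokes the characterization (from \cite{GL_AL}) that $K$ is weakly sofic if and only if every system $\bar w\in Sys(Fin)$ is solvable \emph{over} $K$ --- that is, in some possibly infinite overgroup. After reducing (via functoriality of $H\mapsto H\wr G$ in $H$ and the fact that residually weakly sofic groups are weakly sofic) to the case of finite $H$ and finitely generated residually finite $G$, it solves $\bar w((\bar f,\bar a),\bar x)=1$ in $H\wr\Gamma$, where $\Gamma\le\hat G$ is generated by $G$ together with an $(H,\bar a)$-universal solution $\bar u$; the existence of $\bar u$ is a compactness argument over the finite quotients $G_N$ (using that $\bar w$ is solvable in the finite groups $D_N\ltimes G_N$), and the verification that the limit $\bar\phi$ works uses the locality of the wreath product (Lemma~\ref{lm_local2}). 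So "equations over groups" enters at the level of the definition of weak soficity, not as a repair device for an almost-homomorphism; if you want to salvage your outline, you should redirect it along these lines rather than trying to bound the cocycle defect in a finite wreath product.
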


Let us describe our approach to proving Theorem~\ref{th_main}. W.l.g. we may consider finitely generated $H$ and $G$.
Then,  as any extension of $G$ by $H$ is in the wreath product $H\wr G$, it suffices
to show that $H\wr G$ is weakly sofic. (Recall that a wreath product $H\wr G$ is a semidirect product $H^G\ltimes G$ with an action $(g.f)(x)=f(xg)$,
for $g\in G$ and $f\in H^G$. Particularly, $(f,g)(f',g')=(f(g.f'),gg')$.)  Third, a morphism $H_1\to H_2$ naturally defines a morphism
$H_1\wr G\to H_2\wr G$. Moreover, residually weakly sofic group is weakly sofic. So, it suffices to show that $H\wr G$ is weakly sofic for finite $H$ and residually
finite $G$. To this end we use the following characterization of weakly sofic groups, see \cite{GL_AL}.
\begin{theorem*}
A group $K$ is weakly sofic if and only if every system of equations solvable in all finite groups is solvable over $K$.
\end{theorem*}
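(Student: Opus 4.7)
My plan is to prove the two implications by passing through the ultraproduct characterization of weak soficity: $K$ is weakly sofic iff $K$ embeds into a metric ultraproduct $\M = \prod_\omega (H_n, d_n)$ of finite groups equipped with bi-invariant metrics $d_n$ (of diameter $\le 1$, say).

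\emph{Forward direction} ($K$ weakly sofic $\Rightarrow$ equational property). Fix an embedding $K \hookrightarrow \M$ and let $\Sigma \subset K * F_m$ be a system of equations solvable in every finite group. Each constant $k \in K$ appearing in $\Sigma$ is represented in $\M$ by a sequence $(k_n)_n$ with $k_n \in H_n$. For each $n$, substitute the $k_n$ into $\Sigma$ to obtain an instance in $H_n$; by hypothesis, this instance has a solution $(g_{1,n}, \ldots, g_{m,n}) \in H_n^m$. Form the componentwise ultralimits $\tilde g_j := (g_{j,n})_\omega \in \M$. Bi-invariance of the $d_n$ is precisely what guarantees that the group operations in $\M$ are well-defined modulo null sequences, so evaluating any word operator commutes with the ultralimit; hence $(\tilde g_1, \ldots, \tilde g_m)$ solves $\Sigma$ in $\M$. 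Since $K \le \M$, this is a solution of $\Sigma$ over $K$.

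\emph{Backward direction} (equational property $\Rightarrow$ $K$ weakly sofic). Argue by contrapositive: assuming $K$ is not weakly sofic, exhibit a system $\Sigma$ that is solvable in every finite group but not over $K$. Fix a finite $F = \{a_1, \ldots, a_N\} \subset K$ and $\varepsilon_0 > 0$ witnessing the failure, so no finite group with bi-invariant metric admits a map from $F$ that is simultaneously $\varepsilon_0$-almost multiplicative and $\varepsilon_0$-separated. Construct $\Sigma$ with variables $y_1, \ldots, y_N$ representing a hypothetical weakly sofic realization of $F$, plus auxiliary conjugacy witnesses enforcing the separation bound; the encoding exploits that bi-invariant metrics on finite groups are conjugation-invariant length functions, so separation of $y_i y_j^{-1}$ from the identity reduces to assertions about conjugacy classes carrying suitable marker elements. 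Calibrate $\Sigma$ so that it admits compatible solutions in arbitrary finite groups (which never have to ``contain'' $F$ faithfully, so $y_i$ may collapse together), while any solution over $K$ would upgrade through the universal quotient $K * F_m / \ln{\Sigma}$ to the forbidden bi-invariantly separated copy of $F$ in a finite group — contradicting the choice of $F, \varepsilon_0$.

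The main obstacle is this backward direction: producing an equational surrogate for ``$\varepsilon_0$-separation in a bi-invariant finite metric'' that admits solutions in every finite group yet is incompatible with the embedding of $K$ into $K * F_m / \ln{\Sigma}$. The delicate point is that bi-invariant metric data is a global/analytic feature while equations are local/algebraic, so the bridge requires a careful finite combinatorial reformulation of weak soficity (e.g., discretizing the metric to finitely many conjugacy-class thresholds). Once that surrogate is in place, the rest is bookkeeping, and I would follow the explicit encoding of \cite{GL_AL} to implement it.
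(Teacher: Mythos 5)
First, note that the paper does not actually prove this statement: it is imported verbatim as a specialization of Corollary~19 of \cite{GL_AL} to the class of finite groups, so the only ``proof'' in the paper is a citation. Your forward direction is essentially the standard argument and is fine modulo the fact you import (that weak soficity is equivalent to embeddability into a metric ultraproduct of finite groups with bi-invariant metrics): solving the system coordinatewise and passing to the ultralimit works because word maps factor through the quotient of $\prod_n H_n$ by the null sequences, and the single overgroup $\M$ serves all constant tuples at once, as Definition~\ref{def_sys} requires.

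The backward direction, however, has a genuine gap, and it is exactly where the content of the theorem lives. You propose to encode ``$\varepsilon_0$-separation in some bi-invariant metric on a finite group'' by a finite system of equations, but you never exhibit the system: phrases like ``auxiliary conjugacy witnesses enforcing the separation bound'' and ``calibrate $\Sigma$ so that it admits compatible solutions in arbitrary finite groups'' describe a desideratum, not a construction. The obstruction you yourself identify is real: equations can only assert that words \emph{equal} $1$, whereas non-weak-soficity is a statement that certain words must stay \emph{far from} $1$ in every bi-invariant metric, quantified over all finite targets and all metrics simultaneously. Bridging this requires the machinery of \cite{GL_AL} --- the correspondence between solvability of $\bar w(\bar a,\bar x)=1$ over $K$ and the condition $K\cap\ln{\bar w}=1$ in $K*F$, together with the description of bi-invariant metrics on finite groups via conjugation-invariant length functions and normal generation --- and that is precisely the part you defer to the reference. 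As written, the proposal proves one implication and restates the other; it does not constitute an independent proof of the equivalence, and for the purposes of this paper the statement should simply be cited, as the author does.
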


Let $Sys(Fin)$ be the set of  systems of equations solvable in all finite groups, see Definition~\ref{def_sys} for details.
Now we are ready to formulate the main technical result that implies Theorem~\ref{th_main}.
\begin{theorem}\label{th_main2}
Let $H$ be a finite and $G$ finitely generated residually finite groups. Let $\bar w\in Sys(Fyn)$. Then $\bar w$ is solvable over $H\wr G$.
\end{theorem}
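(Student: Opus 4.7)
The plan is to invoke the equational characterization of weakly sofic groups quoted above. Thus, writing each of the constants $c_j$ of $\bar w$ as $k_j = (f_j,g_j) \in H^G \rtimes G = H \wr G$ with $f_j\colon G\to H$ and $g_j\in G$, it suffices to build a group $L \supseteq H \wr G$ together with a tuple $\bar y \in L^n$ such that $w_i(\bar y,\bar k) = 1$ in $L$ for every $i$.

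I would construct $L$ as a metric ultraproduct of finite wreath products. Using the residual finiteness of $G$, fix a cofinal descending chain of finite-index normal subgroups $N_1 \supseteq N_2 \supseteq \cdots$ with $\bigcap_i N_i=\{1\}$, and set $G_i := G/N_i$. Choose set-theoretic sections $\sigma_i\colon G_i \to G$ with $\sigma_i(1)=1$ and define $\Psi_i\colon H\wr G \to H\wr G_i$ by $\Psi_i(f,g) := (f\circ\sigma_i,\pi_i(g))$, where $\pi_i\colon G\to G_i$ denotes the quotient. The map $\Psi_i$ is not a homomorphism, but its multiplicative defect on a pair $((f,g),(f',g'))$ is supported on the set of $g_n\in G_i$ for which $\sigma_i(g_n)g$ and $\sigma_i(g_n \pi_i(g))$ differ; those two elements of $G$ lie in the same coset of $N_i$, so by equipping $H\wr G_i$ with an appropriate bi-invariant metric $d_i$, the defect can be made arbitrarily small on any fixed finite family of pairs by taking $N_i$ deep enough. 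Since $H\wr G_i$ is finite and $\bar w \in Sys(Fin)$, one finds a solution $\bar y^{(i)} \in (H\wr G_i)^n$ of $w_j(\bar y^{(i)},\Psi_i(\bar k)) = 1$ for each $i$.

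Now form $L := \prod_i (H\wr G_i,d_i)/\omega$, the metric ultraproduct along a nonprincipal ultrafilter $\omega$. The sequence $(\Psi_i)_i$ descends to a genuine homomorphism $\Psi_\omega\colon H \wr G \to L$, and $\bar y := [\bar y^{(i)}]_\omega \in L^n$ satisfies $w_i(\bar y,\Psi_\omega(\bar k)) = 1$ in $L$. Provided $\Psi_\omega$ is injective, $L$ is an overgroup of $H \wr G$ and $\bar y$ is the desired solution.

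The principal obstacle is the injectivity of $\Psi_\omega$. Separation in the $G$-coordinate follows immediately from residual finiteness of $G$: if $g\ne 1$ then $g\notin N_i$ for large $i$, so $\pi_i(g)\ne 1$. The delicate case is an element $(f,1) \in H\wr G$ with $f\not\equiv 1$: the composition $f\circ\sigma_i$ is supported only on $\sigma_i^{-1}(\mathrm{supp}(f))$, which has vanishing normalized Hamming measure in $G_i$ whenever $f$ is finitely supported. The key technical step is therefore to design the bi-invariant metrics $d_i$ so as to penalize pointwise deviations of the base functions $H^{G_i}$ in a non-vanishing manner, and to coordinate the sections $\sigma_i$ with these metrics so that the multiplicative defect of $\Psi_i$ still vanishes in the limit while pointwise non-triviality of the base function survives in the ultraproduct. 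Engineering such a compatible metric, and checking that it is indeed bi-invariant, is the heart of the argument.
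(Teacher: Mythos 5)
There is a genuine gap, and it sits exactly where you place it: the compatibility between the metrics $d_i$ and the maps $\Psi_i$. Your own computation shows that the multiplicative defect of $\Psi_i$ on a pair $\bigl((f,g),(f',g')\bigr)$ is an element $(\delta,1)$ whose base function $\delta$ is supported on the set of $g_n\in G_i$ where $f'\bigl(\sigma_i(g_n)g\bigr)\neq f'\bigl(\sigma_i(g_n\pi_i(g))\bigr)$; for a finitely supported $f'$ this set has at most $2|\mathrm{supp}(f')|$ elements, but it is in general nonempty for every choice of section (it is empty for all pairs only if $\sigma_i$ is a homomorphism). On the other hand, injectivity of $\Psi_\omega$ on elements $(f,1)$ with $f\neq 1$ finitely supported forces $d_i\bigl((\delta,1),1\bigr)$ to be bounded below, uniformly in $i$, for every finitely supported $\delta\neq 1$. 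These two requirements are applied to the very same kind of element: any bi-invariant metric that keeps finitely supported base perturbations uniformly away from the identity also keeps the defect of $\Psi_i$ uniformly away from zero, so $\Psi_\omega$ fails to be a homomorphism; any metric under which the defect vanishes (e.g.\ normalized Hamming on the base) kills injectivity. So the ``engineering'' you defer to the end is not a technical loose end but an impossibility for the maps $\Psi_i$ as defined, and the proposal does not close. Note also that your route, if it worked, would establish weak soficity of $H\wr G$ by a direct embedding into a metric ultraproduct of finite groups and then quote the characterization theorem backwards; the entire point of the paper is that no such direct embedding is known, which is why it argues through equations instead.

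The paper's proof avoids any approximation of $H\wr G$ by finite groups. It builds an explicit overgroup $H\wr\Gamma$ with $\Gamma=\ls{G,\bar u}\leq\hat G$: first, a compactness argument in the profinite completion (an inverse limit of nonempty finite sets $X_N$) produces an $(H,\bar a)$-universal solution $\bar u\in\hat G^n$, where solvability of $\bar w$ in the finite groups $D_N\ltimes G_N$ is the only place $\bar w\in Sys(Fin)$ is used; second, the function parts $\bar\phi$ of the solution are obtained as a Tichonov limit point of pullbacks of finite solutions along sections $\Phi_N$ of $\eta_N:\Gamma\to G_N$, and the identity $\bar w((\bar f,\bar a),(\bar\phi,\bar u))=1$ is verified pointwise via a locality lemma (the value of a word in $H\wr\cA$ at $x$ depends only on the values of the base functions on the finite set $xS(\bar\alpha)$). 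If you want to salvage an ultraproduct-flavoured argument, you would still need something playing the role of the universal solution to decouple the $G$-coordinates of the unknowns from the base functions; the naive $\Psi_i$ cannot do that job.
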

The rest of the paper is devoted to a proof of Theorem~\ref{th_main2}.
Let $\hat{G}$ be a profinite completion of $G$ and $(\bar f,\bar a)\in (H\wr G)^k$.  
We find a solution of $\bar w((\bar f,\bar a), \bar x)=1$ in $H\wr \hat{G}$, where $H\wr\hat{G}$ is an abstract wreath product (we consider as well discontinuous functions.)
Precisely, we find a solution in $H\wr\Gamma$ where $\Gamma<\hat{G}$ is a finitely generated group. Our proof somehow topological and uses different topologies.
First, we show the existence of an $(H,\bar a)$-universal solution for $\bar w$, see Definition~\ref{def_univ}. This uses the profinite structure of $\hat{G}$.
Then $\Gamma$ is generated by $G$ and an $(H,\bar a)$-universal solution $\bar u=(u_1,\dots,u_n)$. To show the existence of a solution in $H\wr\Gamma$ we use
the Tichonov (direct product) topology on $H^\Gamma$.

We finish the introduction by describing the structure of the paper. Section~\ref{sec_equations} (Section~\ref{sec_profinite}) recall some definitions and
results about group equations (profinite groups), respectively, and establish notations and terminology we are using. In Section~\ref{sec_universal} we define
$(H,\bar a)$-universal solution and prove its existence. In Section~\ref{sec_local} we discuss ``locality'' of wreath products.
The main difficulty we have to overcome for the proof of Theorem~\ref{th_main2} is that a morphism $G_1\to G_2$ does not define a morphism 
$H\wr G_1\to H\wr G_2$. (Generally, the pullback of $G_1\to G_2$ defines a morphism $H^{G_2}\to H^{G_1}$. So, the components of $H\wr G_i$ are sent in opposite directions.) 
Still, some times, it is possible to construct a map $H\wr G_1\to H\wr G_2$ which behaves like a ``local'' morphism around some $x\in G_1$.  
In Section~\ref{sec_sol} we finish the proof of Theorem~\ref{th_main2}. 

\section{Group equations}\label{sec_equations}

For a set $X$ we use notation $X^*=\bigcup\limits_{n\in\N} X^n$.
Let $\bar y=(y_1,y_2,\dots,y_j,\dots)$ and $\bar x=(x_1,x_2,\dots,x_j,\dots)$ be countable sets of symbols for constants 
and variables, respectively. 
Let $F=F(\bar a,\bar x)$ be the free group freely  generated by
$\bar y$ and $\bar x$. Let
$\bar w\in F^*$. Notice that $\bar w\in F^r(y_1,\dots,y_k,x_1,\dots,x_n)$ for some $k,n,r\in\N$.
By substitution $\bar w$ defines a map $G^k\times G^n\to G^r$.
Consider the system of equations $\bar w=1$. 
\begin{definition}\label{def_sys}
We say that $\bar w $ is solvable in a group $G$ 
if the sentences 
$$
\forall \bar a\; \exists \bar x \;\bar w(\bar a,\bar x)=1 
$$
is valid in $G$.  
We say that a system $\bar w$ is solvable over group
$G$ if for some $H>G$ the sentence 
$$
\forall \bar a\in G^*\; \exists \bar x\in H^*\; \bar w(\bar a,\bar x)=1 
$$
is valid.
\end{definition} 
Denote by $Sys(G)\subseteq F^*$ the set of all finite systems of equations solvable in $G$.  Let 
$Sys(Fin)=\bigcap\limits_{|G|<\infty} Sys(G)$.
Specifying Corollary~19 of \cite{GL_AL} for $\cK=Fin$ we get a characterization of weakly sofic groups.
\begin{theorem*}
A group $K$
is weakly sofic if and only if every system $\bar w \in Sys(Fin)$ 
is solvable over $K$.
\end{theorem*}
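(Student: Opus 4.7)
The plan is to prove both directions of the ``if and only if''.

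For the forward direction, I would use the standard ultraproduct characterization of weakly sofic groups: $K$ is weakly sofic iff it embeds into a metric ultraproduct $\mathcal{G} = \prod_n (F_n, d_n)/\mathcal{U}$ of finite groups $F_n$ equipped with bi-invariant metrics $d_n$. Given a system $\bar w \in Sys(Fin)$ and constants $\bar a \in K^k$, lift $\bar a$ to a representative sequence $(\bar a_n)$ with $\bar a_n \in F_n^k$. Since $\bar w \in Sys(Fin)$ and each $F_n$ is finite, for every $n$ there exist $\bar x_n \in F_n^m$ with $\bar w(\bar a_n, \bar x_n) = 1$. The ultraproduct class $\bar x = [(\bar x_n)]_\mathcal{U}$ then satisfies $\bar w(\bar a, \bar x) = 1$ in $\mathcal{G}$, and since $\mathcal{G}\supseteq K$ this gives solvability of $\bar w$ over $K$.

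For the converse, the strategy is to extract finite-group approximations of $K$ directly from the equational hypothesis. For each finite subset $F = \{a_1, \dots, a_n\} \subset K \setminus \{1\}$ and each tolerance $\epsilon > 0$, I would construct a system $\bar w_F(\bar y, \bar x)$ with constants $\bar y = \bar a \in F^n$ and variables $\bar x$ encoding both a putative finite-group approximation of $F$ and witnesses of nontriviality. The system must satisfy two properties: first, $\bar w_F \in Sys(Fin)$, arranged by designing the auxiliary equations so that every finite group admits a trivial or structural solution regardless of the constants; second, the solvability of $\bar w_F$ over $K$, in an overgroup $H \supseteq K$ supplied by the hypothesis, yields via a finite-quotient or compactness reduction on $H$ an approximate partial homomorphism $\phi \colon F \to G$ into a finite group $G$ with bi-invariant metric, satisfying $d(\phi(a),1) \geq c > 0$ for $a \in F$.

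The main obstacle is the second property: encoding the condition $\phi(a) \not\approx 1$, which is inherently an inequation, purely through positive word equations so that $\bar w_F$ still belongs to $Sys(Fin)$. The trick, following \cite{GL_AL}, is to use verbal and conjugacy-class constructions that translate metric distinctness under a bi-invariant metric into the solvability of a word equation with carefully chosen auxiliary variables (exploiting the fact that bi-invariant metrics are class functions and that finite groups have a rich supply of verbal witnesses). Once such an encoding is in place, assembling the finite approximations across all pairs $(F,\epsilon)$ via a standard ultraproduct construction yields the embedding of $K$ into a metric ultraproduct of finite groups, which is exactly the definition of weak soficity.
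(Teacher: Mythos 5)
First, a point of reference: the paper does not prove this statement at all --- it is quoted from Corollary~19 of \cite{GL_AL}, specialized to the class $\cK=Fin$ --- so your attempt has to be measured against that external proof. Your forward direction is correct and is essentially the standard argument: if $K$ is weakly sofic it embeds into a metric ultraproduct of finite groups (equivalently, into a quotient $\mathcal{C}/N$ of a Cartesian product $\mathcal{C}$ of finite groups by a normal subgroup $N$); one lifts the constants, solves the system exactly in each coordinate because $\bar w\in Sys(Fin)$, and pushes the coordinatewise solution down through the quotient homomorphism.

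The converse as you outline it has a genuine gap, and it sits exactly where you locate it. A system $\bar w\in Sys(Fin)$ is, by definition, solvable in every finite group for \emph{every} interpretation of the constants, including the trivial one; hence no such system can by itself force its solutions to be non-degenerate, and the ``verbal and conjugacy-class constructions'' you invoke to encode $d(\phi(a),1)\ge c$ are never supplied --- this encoding is the entire content of the theorem, not a technical afterthought. In the actual proof the inequation is never written into the system: non-degeneracy is carried by the clause ``solvable \emph{over} $K$'', i.e.\ by the fact that constants drawn from $K$ keep their identities in any overgroup $H>K$. Concretely, writing $K=F/R$ with $F$ free, one first shows (Glebsky--Rivera) that $K$ is weakly sofic iff the natural map $K\to \mathcal{C}/\langle\langle\tau(R)\rangle\rangle$ is injective, where $\tau:F\to\mathcal{C}=\prod_{(H,\theta)}H$ is the tautological map into the product over all homomorphisms $\theta$ from $F$ to finite groups $H$. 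If injectivity fails, some $f\in F\setminus R$ satisfies $\tau(f)=\prod_{i=1}^m c_i\,\tau(r_i)^{\epsilon_i}c_i^{-1}$ with $r_i\in R$, and then the single equation $f(\bar y)^{-1}\prod_{i}x_i\,r_i(\bar y)^{\epsilon_i}x_i^{-1}=1$ lies in $Sys(Fin)$ (read off coordinatewise from the displayed identity) but is not solvable over $K$, since over $K$ each $r_i(\bar a)=1$ while $f(\bar a)\neq 1$ in every $H>K$. This contrapositive route never builds approximate homomorphisms out of equations, which is precisely the step your plan leaves unproved; to repair your version you would need to exhibit the systems $\bar w_F$ explicitly and verify both that they lie in $Sys(Fin)$ and that their solvability over $K$ yields separation, and no construction of that kind is given.
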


\section{Profinite completion}\label{sec_profinite}

Let $G$ be a finitely generated residually finite group.
Let $\M=\{N\triangleleft G\;|\;G/N\mbox{ is finite}\}$,
the set of co-finite normal subgroups of $G$. The order $N\preccurlyeq M\;\leftrightarrow N\supseteq M$ turns $\M$ to a directed partially
ordered set, see \cite{Ribes1} for details. For $N\in\M$ we denote $G_N=G/N$. For $N,M\in \M$, $N\supseteq M$, let $\eta_{M,N}:G_M\to G_N$
be natural homomorphisms. So, $I=(G_N,\eta_{M,N},\M)$ is an inverse projective system of finite groups. Its inverse limit
$\hat{G}=\lim\limits_{\leftarrow\; I} G_N$ is the profinite completion of $G$, see \cite{Ribes1}.
A group $\hat{G}$ comes naturally with compatible epimorphisms $\eta_N:\hat{G}\to G_N$ and inclusion 
$G\hookrightarrow\hat{G}$. The restriction of  $\eta_N$ on $G$ is just a natural map $G\to G/N=G_N$;
compatibility means that $\eta_M=\eta_{N,M}\circ\eta_N$ for every $N\subseteq M$.
We will use the following notations. For $g\in \hat{G}$ ($g\in G_M$) let $g_N=\eta_N(g)$ ($g_N=\eta_{M,N}(g)$), respectively. If
$\bar g =(g_1,g_2,\dots,g_k)\in \hat{G}^k$ we denote by $\bar g_N=((g_1)_N,\dots,(g_k)_N)$; if $\bar f=(f_1,\dots, f_k)$ we denote
$(\bar f,\bar g)=((f_1,g_1),\dots,(f_k,g_k))$. We will often use it in the situation when $\bar g\in G_N^k$ and
$\bar f\in (H^{G_N})^k$, so, $(\bar f, \bar g)\in (H\wr G_N)^k$.

Let $\bar{w}\in (F(\bar{y},\bar{x}))^r$, $|\bar y|=k$ and $|\bar x|=n$. 

We will use a consequence of the fact that $\hat G$ is a topological group. 
\begin{lemma}\label{lm_pr}
Let $\bar a\in \hat{G}^k$, $\bar u\in \hat{G}^n$ be such that $\bar w(\bar a_N,\bar u_N)=1$ for every $N\in\M$. Then $\bar w(\bar a,\bar u)=1$. 
\end{lemma}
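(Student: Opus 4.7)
The plan is to exploit two elementary facts: the projections $\eta_N\colon \hat{G}\to G_N$ are group homomorphisms, and the family $\{\ker \eta_N\}_{N\in\M}$ has trivial intersection (this being the content of $\hat{G}$ sitting inside $\prod_{N\in\M} G_N$ as an inverse limit, with the embedding $g\mapsto (g_N)_{N\in\M}$ being injective).

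First, I would observe that word evaluation commutes with group homomorphisms: for every coordinate $w_i$ of $\bar w$ and any substitution $(\bar a,\bar u)\in \hat{G}^k\times \hat{G}^n$, applying the homomorphism $\eta_N$ to $w_i(\bar a,\bar u)$ yields $w_i(\eta_N(\bar a),\eta_N(\bar u))=w_i(\bar a_N,\bar u_N)$. Componentwise this gives the identity
\[
\eta_N\bigl(\bar w(\bar a,\bar u)\bigr)=\bar w(\bar a_N,\bar u_N)
\]
in $G_N^r$, where $\eta_N$ is applied coordinate-by-coordinate on the left.

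Next, I would invoke the hypothesis: the right-hand side is trivial in $G_N^r$ for every $N\in\M$. Therefore each coordinate of $\bar w(\bar a,\bar u)\in \hat{G}^r$ lies in $\bigcap_{N\in\M}\ker \eta_N$. Since $\hat{G}=\lim_{\leftarrow} G_N$ is a subgroup of $\prod_{N\in\M} G_N$ via the compatible family $(\eta_N)$, this intersection is the trivial subgroup of $\hat{G}$. Hence $\bar w(\bar a,\bar u)=1$, as required.

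There is essentially no obstacle here: the whole content is the Hausdorff/separation property of the profinite topology on $\hat{G}$, packaged together with the fact that substitution into a group word is functorial with respect to homomorphisms. The only care needed is to keep track of the bar-notation, applying $\eta_N$ componentwise to both the constant tuple $\bar a$ and the solution tuple $\bar u$, and to the tuple of words $\bar w$ at once; this is precisely the notational convention introduced just before the lemma.
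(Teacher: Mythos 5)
Your proof is correct and is exactly the standard argument the paper leaves implicit (the paper states Lemma~\ref{lm_pr} without proof, as a consequence of the inverse-limit/topological structure of $\hat{G}$): word maps commute with the homomorphisms $\eta_N$, and the compatible family $(\eta_N)_{N\in\M}$ separates points of $\hat{G}=\lim\limits_{\leftarrow}G_N\leq\prod_{N\in\M}G_N$. No gaps.
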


\section{$(H,\bar a)$-universal solution} \label{sec_universal}

Fix $\bar w\in F^r(\bar y,\bar x)\cap Sys(Fin)$ with $|\bar y|=k$, $|\bar x|=n$ and $|\bar w|=r$. 
Let $H$ be a finite group and $\bar a\in\hat{G}^k$.
\begin{definition}\label{def_univ}
  $\bar u\in \hat{G}^n$ is called $(H,\bar a)$-universal solution of $\bar w$ if the following statement is true
  $$
\forall N\in\M\;\forall\bar f\in (H^{G_N})^k\;\exists \bar\phi \in (H^{G_N})^n\;\; \bar w((\bar f,\bar a_N), (\bar\phi,\bar u_N))=1
 $$
\end{definition}
\begin{lemma}\label{lm_univ}
$\bar w$ has an $(H,\bar a)$-universal solution $\bar u\in \hat{G}^n$.
\end{lemma}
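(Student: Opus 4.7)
The plan is to realize the universal solution as a point in an intersection of clopen subsets of the compact profinite group $\hat G^n$. For each $N \in \M$ and $\bar f \in (H^{G_N})^k$, let
$$\mathcal V_N(\bar f) = \{\bar u \in \hat G^n : \exists\, \bar\phi \in (H^{G_N})^n \text{ with } \bar w((\bar f, \bar a_N), (\bar\phi, \bar u_N)) = 1 \text{ in } H \wr G_N\}.$$
Since the defining condition depends only on $\bar u_N \in G_N^n$, each $\mathcal V_N(\bar f)$ is a finite union of cosets of the clopen subgroup $\ker(\hat G \to G_N)^n \leq \hat G^n$, hence clopen. Any element of $\bigcap_{N,\bar f} \mathcal V_N(\bar f)$ is an $(H, \bar a)$-universal solution, so by compactness of $\hat G^n$ it suffices to verify the finite intersection property.

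Given finitely many pairs $(N_i, \bar f_i)_{i=1}^\ell$, set $M = \bigcap_{i=1}^\ell N_i \in \M$. The key construction is the finite ``combined'' group
$$K = \Bigl(\prod_{i=1}^\ell H^{G_{N_i}}\Bigr) \rtimes G_M,$$
where $G_M$ acts on each factor $H^{G_{N_i}}$ through the quotient map $\eta_{M, N_i}: G_M \twoheadrightarrow G_{N_i}$. The coordinate projection $\pi_i : K \to H \wr G_{N_i}$, $((\alpha_j)_{j=1}^\ell, g) \mapsto (\alpha_i, \eta_{M,N_i}(g))$, is a group homomorphism precisely because the $G_M$-action on $H^{G_{N_i}}$ factors through $G_{N_i}$.

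Applying the hypothesis $\bar w \in Sys(Fin)$ to the finite group $K$ with constants $Y_j = ((\bar f_{i,j})_{i=1}^\ell, \bar a_{M,j}) \in K$ produces a solution $X_q = ((\bar\phi_{i,q})_{i=1}^\ell, h_q) \in K$. Projecting via each $\pi_i$ yields $\bar w((\bar f_i, \bar a_{N_i}), (\bar\phi_i, \eta_{M,N_i}(\bar h))) = 1$ in $H \wr G_{N_i}$, where $\bar h = (h_1, \ldots, h_n) \in G_M^n$ is the same for all $i$. Lifting $\bar h$ to any $\bar u \in \hat G^n$ then gives $\bar u_{N_i} = \eta_{M,N_i}(\bar h)$ for every $i$, so $\bar u$ lies in all $\mathcal V_{N_i}(\bar f_i)$, establishing the finite intersection property.

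The main technical obstacle is identifying the correct combined group $K$. The natural choice $H \wr G_M$ does not work, because (as stressed in the paper's introduction) there is no canonical homomorphism $H \wr G_M \to H \wr G_{N_i}$. The point of $K$ is that the $G_M$-part is shared across the $\ell$ coordinates, so the $G_{N_i}$-components of the solutions at the different levels are forced to arise from a single $\bar h \in G_M^n$, which then lifts to $\hat G$ to give the compatibility needed for the compactness argument.
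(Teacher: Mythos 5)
Your proof is correct and takes essentially the same approach as the paper: both reduce the claim to a compactness argument over the finite quotients (your finite‑intersection‑property formulation is equivalent to the paper's inverse limit of nonempty finite sets $X_N$), and both settle the finite stage by solving $\bar w$ in a finite semidirect product of a product of copies of $H^{G_{N_i}}$ with a common quotient $G_M$ acting through the maps $\eta_{M,N_i}$. The only (cosmetic) difference is that the paper packages all choices of $\bar f$ at once into a single ``universal'' tuple inside $D_N=\prod_{M\in\M_N}(H^{G_M})^{m}$, whereas you let $\bar f$ vary as part of the index set and only combine the finitely many pairs $(N_i,\bar f_i)$ at hand.
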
  
\begin{proof}
  For $N\in\M$ we use notation $\M_N=\{M\in\M\;|\;N\subseteq M\}$. Particularly, $\M_N$ is finite and $N\in\M_N$.
  Let
  $$
  X_N=\{\bar u\in (G_N)^n\;|\; \forall M\in\M_N\;\forall\bar f\in (H^{G_M})^k\;\exists \bar\phi \in (H^{G_M})^n\;\; \bar w\left( (\bar f,\bar a_M), (\bar\phi,\bar u_M)\right) =1\}
  $$
  By definition, $\eta_{N,M}(X_N)\subseteq X_M$ for $N\subseteq M$. Notice that $\bar u\in X=\lim\limits_{\leftarrow I}X_N$ would provide a proof of the lemma.
  So, it suffices to show that $X\neq \emptyset$, or, the same (by properties of inverse limits of finite sets) that $X_N\neq\emptyset$ for all $N\in\M$.
  Fix $N\in\M$. The rest of the proof is devoted to show that $X_N\neq\emptyset$.
  
  Let $\tilde D_M= (H^{G_M})^m$ with $m=|H|^{k|G_M|}$. Let $D_N=\prod\limits_{M\in\M_N} \tilde D_M$.
   A group $G_N$ has a natural action on $H^{G_M}$ for $M\in\M_N$:
  $$
(g.f)(x)=f(xg_M),\mbox{ where } g\in G_N,\;f\in H^{G_M}.
   $$
   So, $G_N$ has an action on $D_N$ defined componentwise as above. Consider the corresponding semidirect product $D_N\ltimes G_N$.
   As $D_N$ has $|H|^{k|G_M|}$ different projection on $H^{|G_M|}$ we may choose a ``universal'' $\bar f\in D_N^k$. ``Universal''
   means that for every $M\in\M_N$ each element of $(H^{G_M})^k$ appears as a projection of $\bar f$. Notice that the set
   $$
   \tilde X_N=\{(\bar\phi,\bar u)\in D_N\ltimes G_N\;|\;\bar w((\bar f,\bar a_N),(\bar\phi,\bar u))=1\}
   $$
   is nonempty as $D_N\ltimes G_N$ is a finite group and $\bar w\in Sys(Fin)$. On the other hand, $X_N$ is the projection of
   $\tilde X_N$ on $\bar u$ by the universality of $\bar f$.
\end{proof}

\section{Locality of wreath product}\label{sec_local}

Let $\cA,\;\cB, H$ be groups, $F=F(\bar y, \bar x)$ be a free group on $\bar y\cup \bar x$ with $|\bar y|=k$ and $|\bar x|=n$.
Let $p\in F$. One may consider $p$ as a reduced word. Denote by $\Suf(p)$ the set of all suffices (initial subwords)
of $p$. For example, $\Suf(x^2yx^{-1})=\{1,x,x^2,x^2y,x^2yx^{-1}\}$. For $S\subseteq F$ and $\bar a\in(\cA)^{k+n}$ let
$S(\bar a)=\{p(\bar a)\;|\;p\in S\}\subseteq \cA$.

Let $\gamma:\cA\to\cB$ be a surjective homomorphism and $T_\gamma$ its section, that is, $\gamma_{T_\gamma}:T_\gamma\to \cB$ is a bijection.
Restriction to $T_\gamma$ and pullback by $\gamma$ defines a map $H^\cA\to H^\cB$, $\phi\to \phi_\gamma$. In other words
$\phi_\gamma(\gamma(x))=\phi(x)$ for $x\in T_\gamma$. This defines a map $H\wr\cA\to H\wr\cB$ as $(\phi,\alpha)\to (\phi_\gamma,\alpha_\gamma)$.
Here we use a notation $\alpha_\gamma=\gamma(\alpha)$.  Let $(\bar\phi,\bar\alpha)\in (H\wr\cA)^{k+n}$. Then $p(\bar\phi,\bar\alpha)=(\psi,p(\bar\alpha))$ for
some $\psi\in H^\cA$. Similarly, $p(\bar\phi_\gamma,\bar\alpha_\gamma)=(\tilde\psi,(p(\bar\alpha))_\gamma)$ for some $\tilde\psi\in H^\cB$.
Let $S=\Suf(p)$.
\begin{lemma}\label{lm_local2}
If $x\in \cA$ satisfies $xS(\bar \alpha)\subseteq T_\gamma$ then $\psi(x)=\tilde\psi(x_\gamma)$.
\end{lemma}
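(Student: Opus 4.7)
The plan is to compute both $\psi(x)$ and $\tilde\psi(x_\gamma)$ by reading $p$ from left to right and tracking which $\phi_j$ contributes at each step. Write $p = z_1 z_2 \cdots z_\ell$ as a reduced word with $z_i = y_{j_i}^{\epsilon_i}$, so the prefixes are $s_i = z_1 \cdots z_i$ and $\Suf(p) = \{s_0, s_1, \ldots, s_\ell\}$. Set $P_i = s_i(\bar\phi, \bar\alpha) = (\Psi_i, \beta_i)$, so that $\beta_i = s_i(\bar\alpha) \in S(\bar\alpha)$ and $(\Psi_\ell, \beta_\ell) = (\psi, p(\bar\alpha))$.

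Using the wreath-product multiplication $(f,g)(f',g') = (f \cdot (g.f'), gg')$ together with the easily verified inversion $(f,g)^{-1} = (f', g^{-1})$ where $f'(y) = f(yg^{-1})^{-1}$, the recursion $P_i = P_{i-1}(\phi_{j_i}, \alpha_{j_i})^{\epsilon_i}$ simplifies to
\begin{equation*}
\Psi_i(x) = \Psi_{i-1}(x) \cdot \phi_{j_i}(x \cdot t_i)^{\epsilon_i},
\end{equation*}
where $t_i = \beta_{i-1}$ if $\epsilon_i = 1$ and $t_i = \beta_i$ if $\epsilon_i = -1$. In either case $t_i \in S(\bar\alpha)$. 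Unrolling from $i = 1$ to $\ell$ expresses $\psi(x)$ as an explicit product in $H$ whose $i$-th factor depends only on the value of $\phi_{j_i}$ at the point $x \cdot t_i \in xS(\bar\alpha)$. The identical recursion run in $H \wr \cB$ on $(\bar\phi_\gamma, \bar\alpha_\gamma)$ from the base point $x_\gamma$ produces $\tilde\psi(x_\gamma)$ as the parallel product whose $i$-th factor is $\phi_{j_i,\gamma}(x_\gamma \cdot (t_i)_\gamma)^{\epsilon_i}$, with the same indices $j_i$ and the same exponents $\epsilon_i$; this uses only that $\gamma$ is a homomorphism, so the syntactic word $s_i$ evaluates compatibly on both sides.

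Finally, the hypothesis $xS(\bar\alpha) \subseteq T_\gamma$ is exactly what allows a term-by-term comparison: for each $i$ the point $x \cdot t_i$ lies in $T_\gamma$, so by the very definition of the pullback one has $\phi_{j_i,\gamma}(\gamma(x \cdot t_i)) = \phi_{j_i}(x \cdot t_i)$, and $\gamma(x \cdot t_i) = x_\gamma \cdot (t_i)_\gamma$. The two products agree factor by factor, hence $\psi(x) = \tilde\psi(x_\gamma)$. The one bit of bookkeeping care needed is in the inverse case: one must observe that $(\phi,\alpha)^{-1}$ contributes the value of $\phi$ at $x\beta_i$ rather than $x\beta_{i-1}$ — both are prefix evaluations — which is precisely why the hypothesis is stated using the full set $S = \Suf(p)$ rather than only the proper prefixes.
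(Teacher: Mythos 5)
Your proof is correct and follows essentially the same route as the paper: both expand $p(\bar\phi,\bar\alpha)$ so that the first component evaluated at $x$ becomes a product of values $\phi_{j_i}^{\pm 1}$ taken at points $xt_i$ with $t_i\in S(\bar\alpha)$, and then use the hypothesis $xS(\bar\alpha)\subseteq T_\gamma$ to match each factor with its $\gamma$-pullback counterpart. Your version merely makes the paper's one-line product expansion explicit via the prefix recursion, including the careful handling of inverse letters (which is indeed the reason the full set $\Suf(p)$ is needed).
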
  
This lemma is a manifestation of locality of a wreath product in the sense that $\psi(x)$ depends on values of $\bar \phi$ on a finite set
$xS(\bar \alpha)$.
\begin{proof}
  Let $f\in H^\cA$ and $x,xg\in T_\gamma$. Then
\begin{equation}\label{Eq1}
  (g.f)(x)=f(xg)=f_\gamma(x_\gamma g_\gamma)=(g_\gamma.f_\gamma)(x_\gamma).
\end{equation}
  Now, 
  $$
  \psi(x)=(f^1(g^1.f^2)(g^2.f^3)\dots g^{m-1}.f^m)(x),\mbox{ where } g^i\in S(\bar\alpha)\mbox{ and } (f^i)^{\pm 1}\in\bar\phi
  $$
  Similarly,
  $$
  \tilde\psi(x_\gamma)=(f^1_\gamma(g^1_\gamma.f^2_\gamma)\dots g^{m-1}_\gamma.f^m_\gamma)(x_\gamma).
  $$
  Lemma follows by Eq.1.
\end{proof}  

\section{Proof of Theorem~\ref{th_main2}}\label{sec_sol}

Let $\bar u=(u_1,\dots,u_n)\in\hat{G}^n$ be an $(H,\bar a)$-universal solution for $\bar w\in Sys(Fin)$. Let $\Gamma=\ls{G,\bar u}\leq \hat{G}$.
For $(\bar f,\bar a)\in (H\wr G)^k$ we a going to find a solution of $\bar w((\bar f,\bar a),\bar x)=1$ in $H\wr\Gamma$.
The solution we are looking for is in the form $\bar x=(\bar\phi,\bar u)$ for some $\bar\phi\in (H^\Gamma)^n$.
Notice that
any function $f:G\to H$ may be extended to a function $\Gamma\to H$ by putting $f(x)=1$ for $x\in \Gamma\setminus G$.
This defines a natural inclusion $H\wr G\hookrightarrow H\wr\Gamma$. 
Recall that $\Gamma$ is finitely generated, particularly, it is countable.
Also, for every finite $\Phi\subseteq\Gamma$ there exists $N\in\M$ such that $\eta_N|_\Phi$ is an injection.
So, we may inductively construct a chain $\O\subseteq\M$ with an anti-chain function $\O\to 2^\Gamma$, $N\to \Phi_N$ such that
$\Phi_N$ is a section for $\eta_N$ and $\bigcup\limits_{N\in\O}\Phi_N=\Gamma$. Till the end of the article we fix such an $\O$ with such a map
$N\to \Phi_N$. Precisely, we have the following:
\begin{enumerate}
\item For every $N,M\in\O$ either $N\subseteq M$ or $M\subseteq N$ ($\O$ is a chain);
\item $\eta_N|_{\Phi_N}:\Phi_N\to G_N$ is a bijection ($\Phi_N$ is a section for $\eta_N:\Gamma\to G_N$);
\item $\Phi_N\subseteq \Phi_M$ for $M\subseteq N$ (anti-chain map);
\item $\Gamma=\bigcup\limits_{N\in\O}\Phi_N$.
\end{enumerate}  

For $N\in\O$ the composition of restriction to $\Phi_N$ and pullback by $\eta_N$ defines a map $H^\Gamma\to H^{G_N}$, $f\to f_N$, ($f_N(x_N)=f(x)$, for $x\in \Phi_N$).
The map $f\to f_N$ defines a map $H\wr\Gamma\to H\wr G_N$. This is the same construction as in Section~\ref{sec_local}.

For $M\in\O$ fix $\bar\psi^M\in (H^{G_M})^n$ satisfying $\bar w((\bar f_M,\bar a_M),(\bar\psi^M,\bar u_M))=1$. Define $\bar \phi^M\in (H^\Gamma)^n$ such that
$\bar \phi^M(x)=\bar \psi^M(x_M)$ for $x \in \Phi_M$. Let $\bar\phi$ be a limit point of the sequence $\bar\phi^M$ (with respect to direct product (Tichonov)
topology on $(H^\Gamma)^n$. Now, $\bar x=(\bar\phi,\bar u)$ gives a solution we are looking for.

\begin{lemma}\label{lm_solution}
$\bar w((\bar f,\bar a),(\bar \phi,\bar u))=1 $.
\end{lemma}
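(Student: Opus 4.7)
The plan is to split the equation in $(H\wr\Gamma)^r$ coordinate by coordinate and verify each piece separately. For each equation $w_i$ of $\bar w$, evaluating at $((\bar f,\bar a),(\bar\phi,\bar u))$ produces an element of $H\wr\Gamma$ of the form $(\psi_i,\,w_i(\bar a,\bar u))$, so the lemma reduces to showing, for every $i$, both that $w_i(\bar a,\bar u)=1$ in $\Gamma$ and that the function $\psi_i\colon\Gamma\to H$ is identically $1$.

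For the group coordinate, I would use the universal property of $\bar u$. For each $N\in\M$, Definition~\ref{def_univ} furnishes some $\bar\phi\in(H^{G_N})^n$ with $\bar w((\bar f,\bar a_N),(\bar\phi,\bar u_N))=1$ in $H\wr G_N$; projecting this equality onto the $G_N$-factor yields $\bar w(\bar a_N,\bar u_N)=1$ in $G_N$. Lemma~\ref{lm_pr} then upgrades this to $\bar w(\bar a,\bar u)=1$ in $\hat G$, and hence in the subgroup $\Gamma$.

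The heart of the argument is the functional coordinate. Fix $x\in\Gamma$, set $p=w_i$ and $S=\Suf(p)$, and unwind $\psi_i(x)$ as in the proof of Lemma~\ref{lm_local2}: it is a product of values $\phi_j^{\pm 1}(xs)$ and $f_j^{\pm 1}(xs)$ with $s\in S(\bar a,\bar u)$. So $\psi_i(x)$ depends on $\bar\phi$ only through its restriction to the finite set $E:=xS(\bar a,\bar u)\subseteq\Gamma$. Because $\bigcup_{M\in\O}\Phi_M=\Gamma$ and $\Phi_N\subseteq\Phi_M$ whenever $M\subseteq N$, some $M_0\in\O$ satisfies $E\subseteq\Phi_{M_0}$. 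Since $\bar\phi$ is a cluster point of $(\bar\phi^M)_{M\in\O}$ in the Tichonov topology on $(H^\Gamma)^n$, the basic neighborhood prescribing agreement on $E$ meets the tail $\{M\in\O:M\subseteq M_0\}$; pick such an $M$. Then $E\subseteq\Phi_M$ and $\bar\phi$ agrees with $\bar\phi^M$ on $E$, so replacing $\bar\phi$ by $\bar\phi^M$ in the computation leaves $\psi_i(x)$ unchanged. Applying Lemma~\ref{lm_local2} to the map $H\wr\Gamma\to H\wr G_M$ induced by $\eta_M$ with section $\Phi_M$, and noting $E\subseteq\Phi_M$, gives $\psi_i(x)=\tilde\psi_i(x_M)$, where $\tilde\psi_i$ is the $H^{G_M}$-component of the $i$-th coordinate of $\bar w((\bar f_M,\bar a_M),(\bar\psi^M,\bar u_M))$. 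But this last tuple is trivial by the choice of $\bar\psi^M$, so $\tilde\psi_i\equiv 1$, whence $\psi_i(x)=1$. Since $x$ was arbitrary, $\psi_i\equiv 1$.

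The main obstacle I expect is the simultaneous choice of $M$ in the functional-coordinate step: $M$ must at once make $\Phi_M$ large enough to contain the entire locality window $E$ and lie in a Tichonov neighborhood on which $\bar\phi^M$ already matches $\bar\phi$ on $E$. The chain condition on $\O$ makes $\{M\in\O:M\subseteq M_0\}$ cofinal, and the cluster-point property of $\bar\phi$ lets both demands be met inside this cofinal tail; once that coordination is arranged, Lemma~\ref{lm_local2} and Lemma~\ref{lm_pr} handle the remaining bookkeeping automatically.
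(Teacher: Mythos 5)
Your proof is correct and follows the same overall strategy as the paper: the $\Gamma$-coordinate is killed via the universal-solution property projected to each $G_N$ plus Lemma~\ref{lm_pr}, and the $H^\Gamma$-coordinate is handled pointwise by playing the cluster-point property of $\bar\phi$ against the locality of the wreath product. Where you genuinely diverge is in how the locality step is executed. The paper fixes $N\in\O$ with $xS(\bar a,\bar u)\subseteq\Phi_N$, extracts $M\subseteq N$ with $\bar\phi|_{\Phi_N}=\bar\phi^M|_{\Phi_N}$ (so that $\bar\phi_N=\bar\psi^M_N$ holds exactly, as functions on $G_N$), and then applies Lemma~\ref{lm_local2} \emph{twice} --- once for $\eta_N:\Gamma\to G_N$ and once for $\eta_{M,N}:G_M\to G_N$ --- comparing the two computations at the intermediate quotient $G_N$. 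You instead replace $\bar\phi$ by $\bar\phi^M$ directly inside $H\wr\Gamma$, justified by the observation (stated in the remark after Lemma~\ref{lm_local2} and visible in the product formula of its proof) that $\psi_i(x)$ depends on $\bar\phi$ only through its restriction to $E=xS(\bar a,\bar u)$; this lets you get away with a single application of Lemma~\ref{lm_local2}, for $\eta_M:\Gamma\to G_M$ with section $\Phi_M$, using that $(\bar\phi^M)_M=\bar\psi^M$. Your coordination of the two demands on $M$ (that $\Phi_M\supseteq E$, via $M\subseteq M_0$, and that $\bar\phi^M$ agrees with $\bar\phi$ on $E$, via the cluster-point property applied to the cofinal tail past $M_0$) is exactly right. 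The trade-off is minor: your route is more economical, needing no intermediate quotient, but it uses the internals of Lemma~\ref{lm_local2}'s proof rather than only its statement; the paper's route invokes the lemma purely as a black box at the cost of a second application.
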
  

\begin{proof}
  We start with some preliminary considerations. As $\eta_N=\eta_{M,N}\circ\eta_M$ for $M\subseteq N$ we get that
  $\eta_M(\Phi_N)$ is a section for $\eta_{M,N}:G_M\to G_N$. As above, the restriction to $\eta_M(\Phi_N)$ and pullback
  by $\eta_{M.N}$ defines a map $H^{G_M}\to H^{G_N}$, $\psi\to\psi_N$. In other words $\psi_N(x_N)=\psi(x)$ for $x\in\eta_M(\Phi_N)$.
  This defines a map $H\wr G_M\to H\wr G_N$.
  
  By Lemma~\ref{lm_pr} we get $\bar w((\bar f,\bar a),(\bar \phi,\bar u))=(\bar \delta,1)$.
  Given $x\in\Gamma$ we need to show that $\bar \delta(x)=1$.
  Let $S=\bigcup\limits_{w\in\bar w}\Suf(w)$. Take $N\in\M$ such that $xS(\bar a,\bar u)\subseteq \Phi_N$. 
  By construction of $\bar \phi$ it follows that $\bar \phi|_{\Phi_N}=\bar \phi^M|_{\Phi_N}$ for some $M\in\O$, $M\subseteq N$.
  So, $\bar\phi_N=\bar\psi^M_N$ (the definition of $\bar \psi^M$ is in the construction of $\bar\phi$).  
  Let $\bar w((\bar f_N,\bar a_N),(\bar \psi^M_N,\bar u_N))=(\bar \delta',1)$. By the above consideration and
  Lemma~\ref{lm_local2} we obtain that $\bar \delta'(x_N)=\bar \delta(x)$. On the other hand, $x_MS(\bar a_M,\bar u_M)\subseteq \eta_M(\Phi_N)$
  and $\bar w((\bar f_M,\bar a_M),(\bar \psi^M,\bar u_M))=1$ by definition of $\psi^M$. Another application of Lemma~\ref{lm_local2} implies
  $\bar \delta'(x_N)=1$. 
\end{proof}  
\section{Concluding remarks}
The question ``if residually-finite-by-residually finite extensions are sofic'' remains unanswered. Although there is similar characterization of
sofic groups: A group $G$ is sofic if and only if every equation solvable in all permutation groups is solvable over $G$. The problem is that solvability in permutation
groups is not enough to prove, say, the existence of universal solutions.
\bibliography{sofic}

\end{document}